\documentclass{article}
\usepackage[colorlinks=true,urlcolor=blue, linkcolor=blue,pageanchor=false ]{hyperref}
\usepackage{geometry}
\usepackage[english]{babel}
\usepackage{amsmath}
\usepackage{graphicx}
\usepackage{ marvosym }
\usepackage[utf8]{inputenc}
\usepackage{mathtools}
\usepackage{geometry}
\usepackage{amsfonts}
\usepackage{amssymb}
\usepackage{amsthm}
\usepackage{thmtools}
\usepackage{t1enc}
\usepackage[titles]{tocloft}
\usepackage{wasysym}
\usepackage{stmaryrd}
\usepackage{calc}  
\usepackage{enumitem} 
\usepackage{bookmark}
\usepackage{tikz}
\usetikzlibrary{arrows, shapes,snakes}
\usetikzlibrary{decorations}
\usepackage{float}
\usepackage{graphicx}
\usepackage{indentfirst}

\theoremstyle{definition}

\theoremstyle{plain}
\newtheorem{thm}{Theorem}

\newtheorem{prop}[thm]{Proposition}
\newtheorem*{prop*}{Proposition}
\newtheorem*{seged*}{Sublemma}
\newtheorem{cor}[thm]{Corollary}
\newtheorem{lem}[thm]{Lemma}

\newtheorem{cond}[thm]{Condition}
\newtheorem*{cond*}{Condition}

\newtheorem*{lem*}{Lemma}
\theoremstyle{definition}

\newtheorem*{defn*}{Definition}

\newtheorem{fel*}[thm]{Exercise}

\newtheorem*{megf*}{Observation}
\theoremstyle{remark}
\newtheorem{rem}[thm]{Remark}

\newtheorem{obs}[thm]{Observation}
\newtheorem*{rem*}{Remark}

\newenvironment{sbiz}{\par\noindent{\itshape Proof:}\ }{\newmoon}
\newenvironment{ssbiz}{\par\noindent{\itshape Proof:}\ }{\newmoon\newmoon}

\title{Countable Menger theorem with finitary matroid constraints on the ingoing edges }

\author{
Attila Joó\thanks{MTA-ELTE 
Egerváry 
Research Group, 
Department of Operations Research, Eötvös Loránd University, 
Budapest, Hungary. 
E-mail: {\tt 
joapaat@cs.elte.hu}.}}

\date{\the\year}
\begin{document}

\maketitle

\begin{abstract}
 We present a strengthening of the countable Menger theorem  \cite{aharoni1987menger} (edge version) of R. Aharoni (see also in 
 \cite{diestel2005graph} p. 217). Let $ D=(V,A) $ be a countable digraph with $ s\neq t\in V $ and let $ \mathcal{M}=\bigoplus_{v\in 
 V}\mathcal{M}_v $ be a matroid on $ A $ where $ \mathcal{M}_v $ is a finitary matroid on the ingoing edges of $ v $. We show that there 
 is a system of edge-disjoint $ s \rightarrow t $ paths $ \mathcal{P} $ such that the united edge set of the paths is $ \mathcal{M} 
 $-independent, and 
 there is a $ C \subseteq A $ consists of one edge from each element of $ \mathcal{P} $ for which $ \mathsf{span}_{\mathcal{M}}(C) $ 
 covers all the $ s\rightarrow t $ paths in $ D $.
\end{abstract}

\section{Notation}
The variables $ \xi, \zeta $ denote ordinals and $ \kappa $ stands for an infinite cardinal. We write $ \omega $  for the smallest limit 
ordinal (i.e. the set of natural numbers). We apply the abbreviation $ H+h $ for the set $ H\cup \{ h \} $ and $ H-h $ for $ H\setminus 
\{ h \} $ and we denote by $ \triangle $ the symmetric difference (i.e. $ H\triangle J:=(H\setminus J )\cup (J \setminus H) $).

The digraphs $ D=(V,A) $ of this article could be arbitrarily large and may have multiple 
edges and loops (though the later is 
irrelevant). For $ X \subseteq V $ we denote the ingoing and the outgoing edges of $ X $ in $ D $ by $ \mathsf{in}_D(X) $ and $ 
\mathsf{out}_D(X) $. We 
write $ D[X] $ for the subdigraph induced by the vertex set $ X $. If $ e $ is an edge  from vertex $ u $ to vertex $ v 
$, then we write $ 
\mathsf{tail}(e)=u $ and $ \mathsf{head}(e)=v $. The 
paths in this paper are assumed to be finite and directed. Repetition of vertices is forbidden in them (we say walk if we want to allow 
it).  For a path  $ P $ we 
denote by $ \mathsf{start}(P) $ and $ \mathsf{end}(P) $ the first and the last vertex of $ P $. If $ X,Y \subseteq V $, then $ P $ is a $ 
X \rightarrow Y $ path if $ V(P)\cap X= \{ \mathsf{start}(P) \} $ and $ V(P)\cap Y= \{ \mathsf{end}(P) \} $. For singletons  we simplify 
the notation and write $ x \rightarrow y $ instead of $ \{ x \}\rightarrow \{ y \} $. For a path-system (set of paths) $ \mathcal{P} $ we 
denote $ \bigcup_{P\in \mathcal{P}}A(P) $ by $ A(\mathcal{P}) $ and we write for the set of the last edges of the elements of $ 
\mathcal{P} $ simply $ 
A_{last}(\mathcal{P}) $. An $ s $-arborescence  is a directed tree in which every vertex is reachable (by a directed path) from its 
vertex $ s $.

If $ \mathcal{M} $ is a matroid and $ S $ is a subset of its ground set, then $ \mathcal{M}/S $ is the matroid we obtain by the 
contraction of the set $ S $.  We use $ \bigoplus $ for the direct sum of matroids. For the rank 
function we write $ r $ and $ \mathsf{span}(S) $ is the union of $ S $ and the loops (dependent singletons) of $ \mathcal{M}/S $. Let us 
remind 
that a matroid is called finitary if all of 
its circuits are finite.  
One can find a good survey about infinite matroids from the basics in \cite{nathanhabil}.

\section{Introduction}

In this paper we generalize the countable version of Menger's theorem of Aharoni \cite{aharoni1987menger} by applying  the results of 
Lawler and Martel about polymatroidal flows (see \cite{lawler1982computing}). 

Let us recall Menger's theorem (directed, edge version). 

\begin{thm}[Menger]\label{Menger tetel}
Let $ D=(V,A) $ be a finite digraph with $ s\neq t \in V $. Then the maximum number of the pairwise edge-disjoint $ s\rightarrow t $ 
paths is equal to the minimal number of edges that cover all the $ s \rightarrow t $ paths.
\end{thm}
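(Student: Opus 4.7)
The plan is to prove the two inequalities separately. The easy direction, maximum $\leq$ minimum, follows from a pigeonhole observation: if $\mathcal{P}$ is any family of pairwise edge-disjoint $s\rightarrow t$ paths and $C$ covers all $s\rightarrow t$ paths, then every $P\in\mathcal{P}$ contains at least one edge of $C$, and these contributions are distinct across different paths.

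For the nontrivial direction, my approach is the augmenting-walk technique. Let $\mathcal{P}=\{P_1,\dots,P_k\}$ be a maximum edge-disjoint family and put $F:=A(\mathcal{P})$. I would form an auxiliary digraph $D'$ on $V$ by keeping each $e\in A\setminus F$ with its original orientation while reversing each $e\in F$. The first claim is that $t$ is unreachable from $s$ in $D'$: otherwise a shortest $s\rightarrow t$ walk $W$ in $D'$ yields an augmentation, in the sense that the multiset corresponding to $F\triangle A(W)$ in $D$ (where a reversed edge of $W$ cancels the corresponding edge of $F$) decomposes into $k+1$ pairwise edge-disjoint $s\rightarrow t$ paths, contradicting the maximality of $\mathcal{P}$.

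Assuming this, let $X\subseteq V$ be the set of vertices reachable from $s$ in $D'$; then $s\in X$ and $t\notin X$. Set $C:=\mathsf{out}_D(X)$. Every edge of $C$ must lie in $F$, since otherwise its original orientation in $D'$ would extend reachability beyond $X$; similarly, no edge of $F$ lies in $\mathsf{in}_D(X)$, since its reversal in $D'$ would extend $X$. Consequently each $P_i$, which starts inside $X$ and ends outside $X$ using only edges of $F$, must cross $\mathsf{out}_D(X)$ and cannot re-enter $X$, so it contributes exactly one edge to $C$. Hence $|C|=k$, and $C$ clearly meets every $s\rightarrow t$ path, matching the lower bound.

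The main obstacle is the decomposition hidden in the augmentation step: one has to verify rigorously that toggling the edges of $W$ against $F$ produces an edge set that genuinely partitions into $k+1$ edge-disjoint $s\rightarrow t$ paths rather than, say, a path together with some circulations. The cleanest way to discharge this is to phrase the whole argument in the language of integral flows on the unit-capacity network associated with $D$ and invoke the integrality of max-flow min-cut; alternatively one carries out the Ford--Fulkerson-style bookkeeping by hand, checking in-degree equals out-degree at every vertex other than $s$ and $t$ in the new edge set and then peeling off $s\rightarrow t$ paths one at a time.
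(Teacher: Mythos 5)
Your proof is correct, but there is nothing in the paper to compare it against directly: Theorem \ref{Menger tetel} is stated as classical background (with references to Aharoni and Diestel) and is never proved in the paper, whose own work starts with the matroidal generalization. What is worth pointing out is that your route is precisely the finite, unconstrained specialization of the machinery the paper \emph{does} develop for its main theorem. Lemma \ref{augmenting walk basic} (the Lawler--Martel augmenting-walk lemma) also works in the digraph obtained from $D$ by reversing the edges of the current path-system $\mathcal{P}$, also augments along a \emph{shortest} walk reaching $t$, and, when no augmenting walk reaches $t$, takes the cut $X:=V\setminus Y$ where $Y$ is the set of endpoints of augmenting walks --- the complement of your reachable set; conditions (3) and (4) of that lemma are the matroid analogue of the cancellation bookkeeping you do against $F$. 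Two small points of rigor in your write-up deserve attention. First, a shortest $s\rightarrow t$ walk in the residual digraph is automatically a path, which is what makes $A(W)$ a set and $F\triangle A(W)$ meaningful; this is worth saying explicitly. Second, $F\triangle A(W)$ need not literally \emph{partition} into $k+1$ paths --- it contains $k+1$ pairwise edge-disjoint $s\rightarrow t$ paths together with possibly some cycles, which is all the contradiction with maximality requires; you flag exactly this, and the in-degree/out-degree counting plus peeling argument you describe discharges it. Your alternative suggestion of invoking integral max-flow min-cut is legitimate but somewhat self-defeating: with unit capacities that statement already \emph{is} the edge version of Menger's theorem, so appealing to it makes the entire augmenting-walk construction redundant rather than completing it.
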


Erdős observed during his school years that the theorem above remains true for infinite digraphs (by saying cardinalities instead of 
numbers). He felt that this is not the ``right'' infinite generalization of the finite theorem and he conjectured the 
``right''  generalization  which was known as the Erdős-Menger conjecture. It is based on the observation that in Theorem \ref{Menger 
tetel}  an  
optimal cover consists of one edge from each path of an optimal path-system. The Erdős-Menger conjecture states that 
for arbitrary large digraphs there 
is a path-system and a cover that satisfy these complementarity conditions. After a long sequence of partial 
results the countable case
has been settled 
affirmatively   by R. Aharoni:
\begin{thm}[R. Aharoni, \cite{aharoni1987menger}]
Let $ D=(V,A) $ be a countable digraph with $ s\neq t \in V $. Then there is a system $ \mathcal{P} $ of edge-disjoint $ s\rightarrow t $ 
paths such that there is a edge set $ C $ that covers all the $ s \rightarrow t $ paths in $ D $ and $ C $ consists of choosing one edge 
from each $ P\in \mathcal{P} $. 
\end{thm}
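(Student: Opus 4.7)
I would follow Aharoni's wave method. A \emph{wave} from $s$ to $t$ in $D$ is a pair $(\mathcal{Q},X)$ where $\mathcal{Q}$ is a family of pairwise edge-disjoint paths starting at $s$ and $X\subseteq V$ is a vertex set separating $s$ from $t$ such that $X$ is exactly the set of endpoints of the paths in $\mathcal{Q}$, together possibly with vertices unreachable from $s$. Waves are partially ordered by prolongation: $(\mathcal{Q},X)\le(\mathcal{Q}',X')$ precisely when every path of $\mathcal{Q}$ is an initial segment of some path of $\mathcal{Q}'$ and $X'$ lies weakly beyond $X$ in the $t$-direction. This notion is designed so that a wave whose separator $X$ equals $\{t\}$ and all of whose paths end at $t$ is already the desired object.

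My first step would be to show, using the countability of $V$ and a routine $\omega$-length iteration (or a direct Zorn argument), that a \emph{maximum wave} $(\mathcal{Q}^*,X^*)$ exists. Next I would pass to the digraph $D^*$ obtained by contracting the starting segments of $\mathcal{Q}^*$ onto $X^*$; the maximality then translates into the statement that $D^*$ is \emph{hindrance-free} from $X^*$ to $t$, i.e.\ admits no nontrivial wave starting at $X^*$. In such a hindrance-free digraph I would run an alternating-path augmentation that simultaneously enumerates the vertices of $D^*$ and the potential $s\rightarrow t$ paths: at each stage I either route one further vertex of $X^*$ all the way to $t$ through a path edge-disjoint from the current system, or absorb one edge of a candidate $s\rightarrow t$ path into the cover, making it covered once and for all. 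Concatenating the initial segments from $\mathcal{Q}^*$ with the tails produced in $D^*$ yields the path system $\mathcal{P}$, and the chosen edges (one from each path, drawn either from $X^*$ or from the augmenting covers) give the complementary cover $C$.

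The main obstacle lies in the limit step of this augmentation. Without care, an infinite sequence of local repairs could produce a ``path'' of infinite length, violating the convention that the paths in $\mathcal{P}$ are finite; similarly, an edge might be re-routed infinitely often and never settle down. The core of Aharoni's original argument is a delicate bookkeeping that interleaves the two countable enumerations so that every edge is touched only finitely often and every path in the final system consists of edges that stabilize at some finite stage. This device relies essentially on countability, and it is precisely the step that the present paper will need to revisit in order to accommodate the finitary matroid constraints announced in the abstract.
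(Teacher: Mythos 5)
Your skeleton (waves ordered by prolongation, a maximal wave via Zorn's lemma, then an augmentation procedure beyond the maximal wave's cut) is indeed the skeleton of Aharoni's argument and of this paper's, which obtains the stated theorem as the free-matroid special case of Theorem \ref{főtétel  atroid mengerb}. However, two of your definitions are off in ways that break the plan. First, your notion of wave is the one adapted to the \emph{vertex} version of Menger's theorem: you only require that the separator $X$ consist of endpoints of the paths (plus unreachable vertices). For the \emph{edge} version being proved, a wave must saturate the cut edge-wise: every edge of $\mathsf{in}_D(X)$ must be the last edge of a path of the wave (in the paper's notation, $A_{last}(\mathcal{W})$ spans $\mathsf{in}_D(X)$, which for free matroids means $\mathsf{in}_D(X)\subseteq A_{last}(\mathcal{W})$). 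With your definition the final complementarity argument collapses: the cover $C$ consists of one edge per path, but a vertex $x\in X^*$ that is the endpoint of a single wave path may have many other ingoing edges lying on no path of the system, and nothing prevents an $s\rightarrow t$ path from crossing into $X^*$ on such an edge (and on further uncovered edges at each re-entry), thereby avoiding $C$ entirely. Second, your ordering demands that \emph{every} path of the smaller wave be an initial segment of a path of the larger one. The theory needs extensions that may discard paths --- the paper keeps exactly those paths of the old wave that meet the new cut (and Proposition \ref{nagyobb wave csinál} must select a subfamily $\mathcal{Q}'\subseteq\mathcal{Q}$ for precisely this reason). Under your stronger ordering, composing a maximal wave with a nontrivial wave of the contracted remainder $D^*$ generally fails to continue every old path, so the composite is not above the maximal wave in your order and no contradiction with maximality arises; that is, your step ``maximality translates into hindrance-freeness of $D^*$'' does not go through as stated.

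The more serious gap is that the core of the proof is deferred rather than supplied: you correctly identify the difficulty (infinite re-routing and ill-defined limit objects) but attribute its resolution to ``delicate bookkeeping'', and that bookkeeping \emph{is} the proof. The paper resolves it with two concrete devices, both of which survive specialization to free matroids. One is Lemma \ref{augmenting walk basic}, the Lawler--Martel augmenting-walk alternative: either the current path system can be modified so that the set of last edges arriving at $t$ strictly grows, or there is a cut satisfying the complementarity conditions. The other is Lemma \ref{egy út végigvisz}, which extends the paths of a maximal wave to $t$ one at a time: a single path $P$ is grown into an $s$-arborescence edge by edge, always adding the least admissible edge in a fixed well-ordering of $A$ of order type at most $\omega$, while the remaining paths are maintained as a wave that stays maximal in the contracted system $\mathfrak{D}(A(\mathcal{A}_n))$; Lemma \ref{egy él kiszed complete extension} guarantees that these waves extend \emph{completely}, so no path is ever abandoned along the way. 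Termination is exactly where countability enters: if the arborescence grew forever, the chosen edges $e_n$ would from some point on all precede a fixed edge $f$ in the well-ordering, contradicting that its type is at most $\omega$. Without an explicit mechanism of this kind --- Aharoni's own bookkeeping or the paper's arborescence-plus-well-ordering device --- your proposal describes the shape of the argument but not a proof.
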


It is worth to mention that R. Aharoni and E. Berger proved the Erdős-Menger  conjecture in its full generality  in 2009 
(see \cite{aharoni2009menger}) which was one of 
the greatest achievements in the theory of infinite graphs. We present the following strengthening of the countable Menger's theorem 
above.  
 
 \begin{thm}\label{Menger fő0}
Let $ D=(V,A) $ be a countable digraph with $ s\neq t \in V $. Assume that there is a finitary matroid $ \mathcal{M}_v 
 $ on the 
 ingoing edges of  $ v $ for any $ v\in V $. Let $ \mathcal{M} $ be the direct sum of the  matroids $ \mathcal{M}_v $. Then there 
 is a system of edge-disjoint $ s \rightarrow t $ paths $ \mathcal{P} $ such that the united edge set of the paths is $ \mathcal{M} 
  $-independent, and 
  there is an edge set $ C $ consists of one edge from each element of $ \mathcal{P} $ for which $ \mathsf{span}_{\mathcal{M}}(C) $ 
  covers all the $ s\rightarrow t $ paths in $ D $.
 \end{thm}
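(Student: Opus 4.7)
The natural strategy is to graft the wave technique behind Aharoni's original countable Menger proof onto the matroidal max-flow min-cut theorem of Lawler--Martel \cite{lawler1982computing}, which plays the role of the finite base case.

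First I would introduce the appropriate notion of an \emph{$\mathcal{M}$-wave}: a pair $(\mathcal{Q},C)$ where $\mathcal{Q}$ is a family of pairwise edge-disjoint paths starting at $s$, the edge set $A(\mathcal{Q})$ is $\mathcal{M}$-independent, $C$ consists of one edge from each $Q\in\mathcal{Q}$, and every $s\to t$ path of $D$ must meet $\mathsf{span}_{\mathcal{M}}(C)$ before leaving the region currently controlled by $\mathcal{Q}$. Ordering $\mathcal{M}$-waves by a natural extension relation, ascending chains have upper bounds: since each $\mathcal{M}_v$ is finitary, independence is witnessed by finite subsets and hence passes to ascending unions, while edge-disjointness and the covering condition are preserved in the limit by direct inspection. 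Zorn's lemma then yields a maximal $\mathcal{M}$-wave $(\mathcal{Q}^{*},C^{*})$.

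The conclusion of Theorem \ref{Menger fő0} follows once one shows that a maximal $\mathcal{M}$-wave can be rerouted so that each of its paths terminates at $t$. Assuming otherwise, I would fix an enumeration of the edges and vertices of $D$ and, for an $s\to t$ path escaping $\mathsf{span}_{\mathcal{M}}(C^{*})$, construct an alternating walk that alternates unused edges of $D$ with edges of $C^{*}$, performing at each visited head $v$ a matroid exchange inside the finitary $\mathcal{M}_v$. The finite polymatroidal min-cut theorem guarantees that within any finite subdigraph such an alternating walk terminates in an augmenting configuration, and the countability of $D$ is then used, as in Aharoni's proof, to splice these finite augmentations into one global modification producing a strictly larger $\mathcal{M}$-wave, contradicting maximality.

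The main obstacle will be this augmentation step: one must maintain edge-disjointness of $\mathcal{Q}$, $\mathcal{M}$-independence of $A(\mathcal{Q})$, and the complementary cover relationship between $C$ and the wave's separator, all simultaneously. Since each exchange inside some $\mathcal{M}_v$ may displace an edge currently belonging to the designated cover $C$, the alternating walks must be organised so that these replacements do not cascade or orphan earlier commitments; finitariness ensures that each individual swap involves only finitely many edges, and a careful back-and-forth treatment of the countable edge set is what prevents pathologies in the limit.
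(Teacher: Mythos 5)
Your high-level skeleton (waves ordered by an extension relation, Zorn's lemma via finitariness, Lawler--Martel augmentation as the engine) is indeed the same as the paper's. But there is a genuine gap at the decisive step, the linkability of the maximal wave. You propose to assume that the maximal wave cannot be rerouted so that all of its paths reach $t$, and to derive from this a strictly larger wave via one alternating-walk construction. This cannot work as stated: the failure assumption is global (it concerns a simultaneous rerouting of infinitely many paths) and supplies no finite configuration from which an alternating walk could be launched. The augmentation dichotomy (Lemma \ref{augmenting walk basic}) applies to a \emph{given} system of edge-disjoint $s\to t$ paths and either enlarges $\mathsf{span}_{\mathcal{M}_t}$ of its last edges or returns a cut satisfying the complementarity conditions; neither outcome by itself extends the fixed maximal wave, and negating the rerouting statement is essentially negating the theorem, so the argument is circular unless you say exactly which finite object witnesses the failure.

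What the paper does instead is continue the paths \emph{one at a time}. For a single $P\in\mathcal{W}$ it grows an $s$-arborescence reaching $t$ (Lemma \ref{egy út végigvisz}); after each new edge the committed edges are contracted in the matroid, i.e. one passes to $\mathfrak{D}(A(\mathcal{A}_n))$, a new maximal wave extending the previous one is chosen (Corollary \ref{maximal wave}), and Lemma \ref{egy él kiszed complete extension} — whose proof is where the Lawler--Martel method and Proposition \ref{nagyobb wave csinál} are actually deployed — guarantees this extension is \emph{complete}, so that none of the remaining paths is orphaned by the contraction. Countability enters not by ``splicing finite augmentations'' but through a well-ordering of $A$ of order type at most $\omega$: always choosing the least available outgoing edge forces each arborescence to reach $t$ in finitely many steps, since otherwise infinitely many distinct chosen edges would all precede a fixed edge $f$ in an ordering of type $\omega$. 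Finitariness is then needed only at the very end, to conclude that the union of the countably many completed paths $P_n^{*}$ is independent. Without the contraction/complete-extension machinery and this termination argument, the invariants you list (edge-disjointness, independence, and the cover relationship) cannot be maintained across infinitely many reroutings — and that maintenance problem, which you correctly identify as ``the main obstacle,'' is exactly what your sketch leaves unresolved.
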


Instead of dealing with covers directly, we are focusing on  $ t-s $ cuts ($ X\subseteq V $ is 
a $ t-s $ cut if $t\in  X \subseteq V\setminus \{ s \} $). Let us call a path-system $ \mathcal{P} $ independent if $ A(\mathcal{P}) $ is 
independent in $ \mathcal{M} $. Suppose that an independent system $ \mathcal{P} $ of edge-disjoint $ s\rightarrow t $ paths and a 
$ t-s $ cut $ X $ satisfy the \textbf{complementarity conditions}:

\begin{cond}\label{Meng complementary}
\leavevmode

   \begin{enumerate}
   \item $ A(\mathcal{P})\cap \mathsf{out}_D(X)= \varnothing $,
   \item  $ A(\mathcal{P})\cap \mathsf{in}_D(X) $ spans $ \mathsf{in}_D(X) $ in $ \mathcal{\mathcal{M}} $. 
   \end{enumerate}
   
\end{cond} 

Then clearly $ \mathcal{P} $ and $C:= A(\mathcal{P})\cap \mathsf{in}_D(X) $ satisfy the demands of Theorem \ref{Menger fő0}.
Therefore it is enough to prove the following reformulation of the theorem.

\section{Main result}

\begin{thm}\label{főtétel  atroid mengerb}
Let $ D=(V,A) $ be a  countable digraph with $ s\neq t\in V $ and suppose that there is a finitary matroid $ \mathcal{M}_v $ on $ 
\mathsf{in}_D(v) $ for each 
$ v\in V $ and let $ \mathcal{M}=\bigoplus_{v\in V}\mathcal{M}_v $. Then there is a system $ \mathcal{P} $ 
of edge-disjoint $ s\rightarrow t $ paths where $ A(\mathcal{P}) $ is independent in $ \mathcal{M} $ and a $ t-s $ cut $ X  $  
such that $ 
\mathcal{P} $ and $ X $ satisfy the 
complementarity conditions (Condition \ref{Meng complementary}). 
\end{thm}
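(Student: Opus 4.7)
\medskip
\noindent\emph{Proof proposal.} The plan is to combine a finite matroidal Menger theorem---which is essentially a direct consequence of the Lawler--Martel polymatroidal max-flow min-cut theorem \cite{lawler1982computing}---with an Aharoni-style wave construction to lift the result to the countable setting.

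First I would establish the finite version of the statement. When $D$ is finite, equipping each vertex $v$ with the matroid $\mathcal{M}_v$ on $\mathsf{in}_D(v)$ and the free matroid on $\mathsf{out}_D(v)$ makes $D$ into a polymatroidal network in the sense of Lawler--Martel: for unit edge capacities the vertex polymatroidal capacity constraint is exactly that the support of the flow on $\mathsf{in}_D(v)$ is $\mathcal{M}_v$-independent. Their theorem then furnishes an integral maximum flow together with a minimum polymatroidal cut of the same value; an edge-disjoint path decomposition of the flow yields an $\mathcal{M}$-independent system $\mathcal{P}$ of $s\rightarrow t$ paths, and the cut provides the $t-s$ cut $X$ for which Condition \ref{Meng complementary} holds.

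For the countable case, I would set up a wave argument. Call a pair $(\mathcal{P},X)$ a \emph{wave} if $X$ is a $t-s$ cut, $\mathcal{P}$ is an $\mathcal{M}$-independent system of pairwise edge-disjoint $s$-paths each ending either at $t$ or with $\mathsf{end}(P)\in X$, $A(\mathcal{P})\cap \mathsf{out}_D(X)=\varnothing$, and $A_{last}(\mathcal{P})\cap \mathsf{in}_D(X)$ spans $\mathsf{in}_D(X)$ in $\mathcal{M}$. A \emph{solution} is a wave in which every path reaches $t$. Fix an enumeration $v_0,v_1,\dots$ of $V$ and an enumeration $e_0,e_1,\dots$ of $A$, and build an $\omega$-length sequence of waves $(\mathcal{P}_n,X_n)$ in which the $n$-th step either completes one of the ``unfinished'' paths to $t$ by using the finite case applied to an appropriate finite subgraph containing the first $n$ vertices and edges, or records that no such completion is possible (in which case $X_n$ is enlarged accordingly). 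Finitariness of $\mathcal{M}$ is used here in the crucial observation that the pointwise limit of an increasing chain of waves is again a wave, since $\mathcal{M}$-independence is closed under directed unions.

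The principal obstacle will be the augmentation step inside this construction. Given a wave $(\mathcal{P},X)$ in which some $P\in \mathcal{P}$ does not reach $t$, one must extend the wave without destroying independence or the spanning property of $A_{last}(\mathcal{P})\cap \mathsf{in}_D(X)$ in $\mathcal{M}$. In the classical Aharoni argument this is achieved by an alternating walk from $\mathsf{end}(P)$ to $t$ whose symmetric difference with $A(\mathcal{P})$ yields a better path system; in our matroidal setting the ``alternation'' at each visited vertex $w$ must be phrased in terms of fundamental $\mathcal{M}_w$-circuits of newly entering edges, so that a swap preserves independence. The hardest point will be to show that if no such matroidal alternating walk exists then the cut $X$ can be shifted (to the set of vertices reachable from $\mathsf{end}(P)$ along partial alternating walks) to produce a strictly larger $t-s$ cut still satisfying the complementarity conditions. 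Iterating this shift, and appealing to finitariness of each $\mathcal{M}_v$ to keep every step finite, the countable construction terminates with a solution, which by the discussion preceding the theorem is exactly the conclusion required.
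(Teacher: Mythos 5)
You have assembled the right ingredients---Lawler--Martel augmentation, waves, and finitariness to pass to limits of chains---but your outline is missing the mechanism that actually carries the countable case, and the device you substitute for it does not work. The crux is not ``complete an unfinished path or else enlarge the cut''; it is the guarantee that a path of a wave can be continued to $t$ \emph{without sacrificing the completability of all the other paths}, so that the process can be iterated through every path. Your sketch never invokes maximality of waves, and without it nothing prevents the following failure mode: completing one path consumes edges (or matroid capacity at some vertex) that another path needs, the cut must then be shifted, and the already-completed path now crosses the new cut, so the limit of your $(\mathcal{P}_n,X_n)$ is not a solution. The paper's proof is organized exactly around this point: it fixes a \emph{maximal} wave, and proves (Lemma \ref{egy út végigvisz}) that any one path $P$ of it admits a continuation to $t$ inside an $s$-arborescence $\mathcal{A}$ such that, after contracting $A(\mathcal{A})$ in the matroid, $(\mathcal{W}\setminus\{P\},X_0)$ is again a maximal wave and every subsequent extension is \emph{complete}, i.e.\ drops no path of the old wave; this rests on Lemma \ref{egy él kiszed complete extension}, Proposition \ref{t elér max wave} and Proposition \ref{nagyobb wave csinál}. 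Nothing in your proposal plays the role of these statements, and the assertion in your final paragraph that ``the countable construction terminates with a solution'' is precisely the content that needs proof.

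Two of your concrete devices are also problematic in themselves. First, applying the finite theorem to an exhaustion of $D$ by finite subgraphs does not let you ``complete one of the unfinished paths'': the optimal path system and cut produced in a finite subgraph need not be compatible with the infinite wave already built, nor with each other as the subgraph grows; this failure of compactness is exactly why Erdős--Menger-type theorems are hard. The paper avoids finite approximation altogether by running the augmenting-walk argument (Lemma \ref{augmenting walk basic}) directly in the infinite digraph, which is legitimate because walks are finite objects and $\mathcal{M}$ is finitary. Second, termination: in the paper, even constructing a \emph{single} continuation of one path is an edge-by-edge recursion growing an arborescence, and its termination is proved by fixing a well-ordering of $A$ of order type at most $\omega$ and deriving a contradiction from infinitely many chosen edges lying below a fixed edge---this is the one place where countability of $D$ enters. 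Your appeal to ``finitariness of each $\mathcal{M}_v$ to keep every step finite'' does not address this: finitariness bounds circuits, not the length of the search. So, while your high-level skeleton (waves plus matroidal alternating walks) is in the same family as the paper's, the proposal as written has genuine gaps at the heart of the argument.
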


\begin{proof}
Without loss of generality we may assume that $ \mathcal{M} $ does not contain loops.
A pair $ (\mathcal{W},X) $ is called a  \textbf{wave}  if $ X $ is 
a $ t-s $ cut  and $ \mathcal{W} $ is an independent system of edge-disjoint  $ s \rightarrow X $ paths such that the second 
complementarity 
condition holds for $ \mathcal{W} $ and $ X $  (i.e.  $ A_{last}(\mathcal{W}) $ spans $ \mathsf{in}_D(X) $ in $ \mathcal{M} $).

\begin{rem}\label{exists wave}
 By picking an arbitrary base $ B $ of $ \mathsf{out}(s) $ and taking $ \mathcal{W}:= B $ as a set of 
single-edge paths and $ X:= V\setminus \{ s \} $ we obtain a wave $ (\mathcal{W},X) $ thus always exists some wave.
\end{rem}

  We say that 
the wave $ (\mathcal{W}_1,X_1) $ extends the wave $ (\mathcal{W}_0,X_0) $  and write $ (\mathcal{W}_0,X_0)\leq (\mathcal{W}_1,X_1) $ if

\begin{enumerate}
\item $ X_1 \subseteq X_0 $,
\item $ \mathcal{W}_1 $ consists of the forward continuations 
  of some of the paths in  $ \mathcal{W}_0 $ such that the continuations lie in  $ X_0 $,
  \item $ \mathcal{W}_1 $ contains all of those  paths of $ \mathcal{W}_0 $ that meet $ X_1 $.
\end{enumerate}
 
   If in 
  addition $ \mathcal{W}_1 $ 
  contains a forward-continuation 
of \emph{all} the 
elements of $ \mathcal{W}_0 $, then the extension is called \textbf{complete}.  Note that  $ \leq $ is a partial ordering on the waves 
and if $ (\mathcal{W}_0,X_0)\leq (\mathcal{W}_1,X_1) $ holds, then the extension is proper (i.e. $ (\mathcal{W}_0,X_0)< 
(\mathcal{W}_1,X_1) $ ) iff $ X_1\subsetneq X_0$. 
\begin{obs}\label{incomplete extension}
If $ (\mathcal{W}_1,X_1) $ is an incomplete extension of $ (\mathcal{W}_0,X_0) $, then it is a proper extension  thus $ X_1 \subsetneq 
X_0 $. Furthermore,  $ \mathcal{W}_1 $ and $ X_0 $ 
do not  satisfy the second complementarity  condition (Condition \ref{Meng complementary}/2).
\end{obs}

\begin{lem}\label{sup of waves}
If a nonempty set $ \mathcal{X} $ of waves is linearly ordered by  $ \leq $, then $ \mathcal{X} $ has a unique smallest upper bound $ 
\sup(\mathcal{X}) $.  
\end{lem}

\begin{sbiz}
We may suppose that $ \mathcal{X} $ has no maximal element. Let $ \left\langle (\mathcal{W}_{\xi}, X_{\xi}): \xi<\kappa  \right\rangle 
$ be a cofinal sequence of $ (\mathcal{X},\leq) $.  We 
define  $ X:=\bigcap_{\xi<\kappa} X_\xi $ and  \[ \mathcal{W}:=\bigcup_{\zeta<\kappa}\bigcap_{\zeta<\xi}\mathcal{W}_\xi.
\] For $ P\in \mathcal{W} $ we have $  V(P)\cap X_\xi=\{ \mathsf{end}(P) \} $ for all large enough $ \xi<\kappa $ hence $ V(P)\cap X=\{ 
\mathsf{end}(P)\} 
$. The paths in $ \mathcal{W} $ are pairwise edge-disjoint since $ P_1,P_2\in \mathcal{W} $ implies that $ P_1,P_2\in \mathcal{W}_{\xi} $ 
for all large enough $ \xi $.  Since the matroid $ \mathcal{M} $ is finitary the same argument shows that  $ \mathcal{W} $ is 
independent.

 Suppose that $ e\in  \mathsf{in}_D(X)\setminus A(\mathcal{W}) $. For a large enough $ \xi<\kappa $ we have $ e\in 
 \mathsf{in}_D(X_{\xi}) $. Then the last edges of those elements of $ \mathcal{W}_\xi $ that terminate in $ \mathsf{head}(e) $ spans $ e 
 $ in $ \mathcal{M} $. These paths have to be elements of all the further waves of the sequence (because of the 
 definition of $ \leq $) and thus of $ \mathcal{W} $ as well. Therefore $ (\mathcal{W},X) $ is a wave and clearly an upper bound.
 
 Suppose that $ (\mathcal{Q},Y)  $ is another upper bound for $ \mathcal{X} $. Then $ X_\xi \supseteq Y$ for all $ \xi<\kappa $ and  
 hence $ X 
 \supseteq Y $. Let $ Q\in \mathcal{Q} $ be arbitrary.  We know that $ \mathcal{W}_\xi $ contains an initial segment $ Q_\xi $ of $ Q $ 
 for all $ 
 \xi<\kappa $  because  $ (\mathcal{Q},Y)  $ is an upper bound (see the definition of $ \leq $). For $ \xi<\zeta<\kappa $ the path $ 
 Q_\zeta $ is a (not necessarily proper) forward continuation of $ Q_\xi $. From some index the sequence $ \left\langle Q_\xi: 
 \xi<\kappa  \right\rangle $ need to be constant, say $ Q^{*} $, since $ Q $ is a finite path. But then $ Q^{*}\in \mathcal{W} $. 
 Thus 
 any $ Q\in \mathcal{Q} $ is a forward continuation of a path in $ \mathcal{W} $. Finally assume that some $ P\in \mathcal{W} $ meets $ Y 
 $. 
 Pick a $ \xi<\kappa $ for which $ P\in \mathcal{W}_\xi $. Then $ (\mathcal{W}_\xi,X_\xi)\leq (\mathcal{Q},Y) $ guarantees $ P\in 
 \mathcal{Q} $. Therefore $(\mathcal{W},X)\leq(\mathcal{Q},Y).  $
\end{sbiz}\\

The Remark \ref{exists wave} and Lemma \ref{sup of waves} imply via Zorn's Lemma the following.
\begin{cor}\label{maximal wave}
There exists a maximal wave. Furthermore, 
there is a maximal wave which is greater or equal to an arbitrary prescribed wave.
\end{cor}

Let $ (\mathcal{W},X) $ be a maximal wave. To prove Theorem \ref{főtétel  atroid mengerb} it is enough to show that there is an 
independent 
system of edge-disjoint $ 
s\rightarrow t $ paths $ \mathcal{P} $ that consists of the forward-continuation of all the paths in $ \mathcal{W} $. Indeed, condition $ 
A(\mathcal{P})\cap \mathsf{out}_D(X)=\varnothing $ will be true automatically (otherwise $ \mathcal{P} $ would violate  independence, 
when 
 the violating path ``comes back'' to $ X $) 
and hence  $ 
\mathcal{P} $ and $ X $ will satisfy the complementarity conditions.

We need a method developed by Lawler and Martel in \cite{lawler1982computing} for the augmentation of polymatroidal flows in 
finite networks which works in the infinite case as well.

\begin{lem}\label{augmenting walk basic}
Let  $ \mathcal{P} $ be an independent system of edge-disjoint $ s\rightarrow t $ paths. Then 
 there is  either an independent system of edge-disjoint $ s \rightarrow t $ paths $ \mathcal{P}' $ with $ 
\mathsf{span}_{\mathcal{M}_t}(A_{last}(\mathcal{P}))\subsetneq \mathsf{span}_{\mathcal{M}_t}(A_{last}(\mathcal{P}')) $ or there is a 
$ t-s $ cut $ X $  such that the complementarity conditions (Condition \ref{Meng complementary}) hold for $ \mathcal{P} $ and $ X $. 
\end{lem}

\begin{sbiz}
 Call $ W $ an augmenting walk  if
 
 \begin{enumerate}
 \item  $ W $ is a directed walk with respect to the digraph that we obtain from $ D $ by changing the direction of edges in $ 
 A(\mathcal{P}) 
 $,
 \item $  \mathsf{start}(W)=s $ and $ W $ meets no more  $ s $,
 \item $ A(W)\triangle A(\mathcal{P}) $ is independent,
 \item if for some initial segment $ W' $ of $ W $ the set $ A(W')\triangle A(\mathcal{P}) $ is not independent, then for the one edge 
 longer initial segment $ \mathcal{W}''=\mathcal{W}'e $ the set $ A(W'')\triangle A(\mathcal{P}) $ is independent again.
 \end{enumerate}

\noindent 
If there is an augmenting walk 
terminating in $ 
t $, then let $ W $ be a shortest such a walk. Build $ \mathcal{P}' $ from the edges $ A(W)\triangle A(\mathcal{P}) $ in the 
following way. 
Keep untouched those $ 
P\in \mathcal{P} $ for which $ A(W)\cap A(P)=\varnothing $ and replace the remaining finitely many paths, say $ \mathcal{Q}\subseteq 
\mathcal{P} $ where
$\left|\mathcal{Q}\right|=k $,  by $ k+1 $ new $ s \rightarrow t $ paths constructed from the edges $ A(W)\triangle A(\mathcal{Q}) $ by 
the greedy method. Obviously $ \mathcal{P}' $ is an independent system of edge-disjoint $ s\rightarrow t $ paths. We need to show that  
\[ \mathsf{span}_{\mathcal{M}_t}(A_{last}(\mathcal{P}))\subsetneq \mathsf{span}_{\mathcal{M}_t}(A_{last}(\mathcal{P}')). \] If only the 
last 
vertex of 
$ W $ is $ t $, then it is clear. Let $ f_1, e_1,\dots,f_n, e_n, f_{n+1}  $ be the edges of $ W $ incident with $ t $   enumerated with 
respect to the direction of $ W $.  
The initial segments of $ W $ up to the inner appearances of $ t $ may not be augmenting walks (since $ W $ is a shortest that terminates 
in $ t $) hence by condition 4 the one edge longer 
and the one edge shorter segments are.  It follows that for any $1\leq i \leq n $ there is a $ 
\mathcal{M}_t $-circuit $ C_i $ in  \[  A_i:= A(\mathcal{P})\cap\mathsf{in}_D(t)+f_1-e_1+f_2-e_2+\dots +f_i. \] Furthermore, $ f_i\notin 
A(\mathcal{P}) $ and $ e_i\in C_i\cap A(\mathcal{P}) $. It implies by induction that $ A_i \setminus \{ e_i \} $ spans the same set in $ 
\mathcal{M}_t $ 
as $ A(\mathcal{P})\cap\mathsf{in}_D(t) $ whenever $ 1\leq i \leq n $ and hence $ A_n\cup\{ f_{n+1} \} $  spans a strictly larger.\\

Suppose now that none of the augmenting walks terminates in $ t $. Let us denote the set of the last vertices of the  
augmenting walks by $ Y $. We show that $ \mathcal{P} $ and $ X:=V\setminus Y $ satisfy the complementarity conditions.  Obviously $ X $ 
is a $ t-s $ cut. Suppose, to the contrary, that $ e\in A(\mathcal{P})\cap \mathsf{out}_D(X) $. Pick an 
augmenting walk $ W $ terminating in $ \mathsf{head}(e) $.  
Necessarily $ e\in A(W) $, otherwise $ We $ would be an  augmenting 
walk contradicting to the definition of $ X $. Consider the initial segment $ W' $ of $ W $ for which the following edge is 
$ e $. 
Then $ W'e $ is an augmenting walk (if $ W' $ itself is not, then it is because of condition 4) which leads to the same contradiction. 

To show the second complementarity condition assume  that $ f\in \mathsf{in}_D(X)\setminus  A(\mathcal{P}) $. Choose an augmenting walk $ 
W $ that terminates in $ \mathsf{tail}(f) 
$. We may 
suppose that $ f\notin A(W) $ otherwise we consider the initial segment $ W' $ of $ W $ for which the following edge is $ f $ (it is an 
augmenting walk, otherwise  $ W'f $ would be by applying condition 4). The initial segments of $ Wf $ that terminate in $ 
\mathsf{head}(f) $ may not be augmenting walks. Let $ f_1, e_1,\dots,f_n, e_n $ be the ingoing-outgoing edge pairs of $ \mathsf{head}(f) 
$ in $ W $ with respect to the direction of $ W $ (enumerating with respect to the direction of $ W $) and let $ f_{n+1}:=f $. Then for 
any $1\leq i \leq n+1 $ there is a unique $ 
\mathcal{M} $-circuit $ C_i $ in  \[   A(\mathcal{P})\cap\mathsf{in}_D(\mathsf{head}(f))+f_1-e_1+f_2-e_2+\dots +f_i. 
\] 
It follows 
by using condition 4 and the definition of $ X $ that for $ 1\leq i \leq n $

\begin{enumerate}
\item $ f_i\notin A(\mathcal{P}) $ and $ e_i\in 
C_i\cap A(\mathcal{P}) $,
\item $ \mathsf{tail}(e_i),\mathsf{tail}(f_i)\in Y $ (tail with respect to the original direction),
\item $ C_i\subseteq \mathsf{in}_D(X) $.

\end{enumerate}
 Assume that we already know for some $ 1\leq i\leq n $ that $ f_j $ is 
spanned by $ F:= A(\mathcal{P})\cap \mathsf{in}_D(X) $ in $ \mathcal{M} $ whenever 
$ j<i $. Any 
element 
of $ C_i\setminus \{ f_i \} $ which is not in $ F $ has a form $ f_j 
$ for some $ j<i $ thus by the induction hypothesis it is spanned by $ F $ and hence we obtain that $ f_i\in 
\mathsf{span}_{\mathcal{M}}(F) $ as well. By induction it 
is true for $ i=n+1 $.
\end{sbiz}

\begin{prop}\label{nagyobb wave csinál}
Assume that $ (\mathcal{W},X) $ and $ (\mathcal{Q},Y) $ are waves where $ Y \subseteq X $ and $ \mathcal{Q} $ consists of the 
forward-continuation  of some of the paths in $ \mathcal{W} $ where the new terminal segments lie in $ X $. Let $ \mathcal{W}_Y:=\{ P\in 
\mathcal{W}: \mathsf{end}(P)\in 
Y 
\} $. Then for an appropriate $ \mathcal{Q}'\subseteq \mathcal{Q} $ the pair $ (\mathcal{W}_Y\cup \mathcal{Q}',Y) $ is a wave with $ 
(\mathcal{W},X)\leq (\mathcal{W}_Y\cup \mathcal{Q}',Y) $. 
\end{prop}
\begin{sbiz}
 The path-system $ \mathcal{W}_Y\cup \mathcal{Q} $ (not necessarily  disjoint union)  is edge-disjoint since the edges in $ 
 A(\mathcal{Q})\setminus A(\mathcal{W}) $ lie in $ X $. For the same reason it may violate independence only at the vertices $ \{ 
 \mathsf{end}(P): P\in \mathcal{W}_Y \}\subseteq Y $. Pick a base $ B $ of $\mathsf{in}_D(Y) $ for which 
  \[ A_{last}(\mathcal{W}_Y)\subseteq B \subseteq A_{last}(\mathcal{W}_Y)\cup A_{last}(\mathcal{Q}). \]
  It is routine to check that the choice  $ \mathcal{Q}'=\{ P\in \mathcal{Q}: A(P)\cap B\neq \varnothing \} $ is suitable. 
\end{sbiz}

For $ A_0\subseteq A $ let us denote $ (D-\mathsf{span}_{\mathcal{M}}(A_0),\mathcal{M}/\mathsf{span}_{\mathcal{M}}(A_0)) $ by 
$ \boldsymbol{\mathfrak{D}(A_0) }$. Note that for any $ A_0 $ the matroid corresponding to $ \mathfrak{D}(A_0)  $ has no loops and $ 
(D,\mathcal{M})=\mathfrak{D}(\varnothing)=:\boldsymbol{\mathfrak{D}} $.
\begin{obs}
If $ (\mathcal{W},X) $ is a wave and for some $ A_0\subseteq A \setminus A(\mathcal{W}) $ 
the set $ A_0\cup A(\mathcal{W}) $ 
is independent, then $ (\mathcal{W},X) $ is a $ \mathfrak{D}(A_0)  $-wave as well.
\end{obs}

\begin{lem}\label{egy él kiszed complete extension}
If $ (\mathcal{W},X) $ is a maximal $ \mathfrak{D} $-wave   and 
$ e\in A \setminus A(\mathcal{W}) $ for which $ A(\mathcal{W})\cup \{ e \} $ is independent,  then all the extensions 
 of 
the $ \mathfrak{D}(e) $-wave $ (\mathcal{W},X) $ in $ \mathfrak{D}(e) $   are complete.
\end{lem}

\begin{sbiz}
Seeking a contradiction, assume that we have an  incomplete extension $ (\mathcal{Q},Y) $ of $ (\mathcal{W},X) $ with respect to $ 
\mathfrak{D}(e) $. Observe 
that necessarily $ e\in \mathsf{in}_D(Y) $ and $ 
r_{\mathcal{M}}(\mathsf{in}_D(Y)/A_{last}(\mathcal{Q}))=1 $. Furthermore, $ Y \subsetneq X $ by Observation \ref{incomplete extension}. 

 We show that $ (\mathcal{W},X) $ has a proper extension with respect to 
 $ \mathfrak{D} $ as well contradicting to its 
maximality.  Without loss of generality we may assume that $ \mathsf{in}_D(X)=A_{last}(\mathcal{W}) $. Indeed, otherwise 
we  delete the edges $ \mathsf{in}_D(X)\setminus A(\mathcal{W}) $ from $ D $ and from $ \mathcal{M} $. It is 
routine to check that after the deletion $ (\mathcal{W},X) $ is still a  wave and a proper extension of it remains a proper extension 
after putting back these edges. 

 Contract $ V \setminus X $ to $ s $  and 
contract $ Y $ to $ t $ in $ D $ and keep $ \mathcal{M} $ unchanged. Apply the 
augmenting 
walk method (Lemma \ref{augmenting walk basic}) in the resulting system with the   $ V \setminus X\rightarrow Y $ terminal segments 
of the paths in $ \mathcal{Q} $. 
If the 
augmentation is 
possible, then the 
assumption $ 
\mathsf{in}_D(X)=A_{last}(\mathcal{W}) $ ensures  
that the first edge of any element of the resulting path-system $ \mathcal{R} $ is a last edge of some path in $ \mathcal{W} $. By 
uniting the elements of $ \mathcal{R} $ with the corresponding paths from $ \mathcal{W} $ 
 we can get a new independent system of edge-disjoint  $ s\rightarrow Y $ paths $ 
\mathcal{Q}' $ (with 
respect to $ \mathfrak{D} $). Furthermore, $ r_{\mathcal{M}}(\mathsf{in}_D(Y)/A_{last}(\mathcal{Q}))=1 $ 
guarantees that $ A_{last}(\mathcal{Q}') $ spans $ \mathsf{in}_D(Y) $ in $ \mathcal{M} $ and hence  $ (\mathcal{Q}',Y) $ is a wave. Thus 
by 
Proposition \ref{nagyobb wave csinál} we get an extension of $ (\mathcal{W},X) $ and it is proper because $ Y\subsetneq X $ which is 
impossible.

Thus the augmentation must be unsuccessful which implies by Lemma \ref{augmenting walk basic} that there is some $ Z $ with $ Y \subseteq 
Z \subseteq X $  such that  $ Z 
$ and 
$ \mathcal{Q} $ satisfy the complementarity conditions. By Proposition \ref{incomplete extension} we know 
that $ Z \subsetneq X$. For the
initial segments $ \mathcal{Q}_Z $ of the paths in $ \mathcal{Q} $ 
up to $ Z $  the pair $ (\mathcal{Q}_Z,Z) $ forms a 
 wave.  Thus  by applying Proposition \ref{nagyobb wave csinál}  with $ (\mathcal{W},X) $ and $  (\mathcal{Q}_Z,Z) $ we obtain an 
 extension 
 of $ (\mathcal{W},X) $ which is proper because $ Z \subsetneq X $ contradicting to the maximality of $ (\mathcal{W},X) $.
\end{sbiz}

\begin{prop}\label{t elér max wave}
If $ (\mathcal{W},X) $ is a maximal wave and $ v\in X $, then there is a $ 
v\rightarrow t $ path $ Q $ in $ D[X] $  such that  $ A(\mathcal{W})\cup A(Q) $ is independent.
\end{prop}

\begin{sbiz}
 It is 
equivalent to show that there exists a
$ v\rightarrow t $ path $ Q $ in  $ D-\mathsf{span}_{\mathcal{M}}(A(W)) $ (path $ Q $ will  necessarily lie in $ D[X] $ because 
$ 
D-\mathsf{span}_{\mathcal{M}}(A(W)) $ does not contain any edge entering into $ X $.) Suppose, to the contrary, that it is not the case. 
Let $ X' 
\subsetneq X $ be the set 
of 
those 
vertices in $ X $  
that are unreachable from $ v $ in $ D-\mathsf{span}_{\mathcal{M}}(A(W)) $ (note that $ v\notin X' $ but $ t\in X' $ by the indirect 
assumption). Let $ 
\mathcal{W}' $ be consist of 
the paths in $ \mathcal{W} $ that meet $ X' $. 
If we prove that $ (\mathcal{W}',X') $ is a wave, 
then we are done since it would be a proper extension of the maximal wave $ (\mathcal{W},X) $. Assume that $ f\in 
\mathsf{in}_D(X')\setminus A(\mathcal{W}') $. Then by the definition of $ X' 
$ we have $ \mathsf{tail}(f)\in V \setminus X $ thus $ f\in \mathsf{in}_D(X) $. Hence $ f $ is spanned by the last edges of the paths in 
$ 
\mathcal{W} $ terminating in $ \mathsf{head}(f) $ and all these paths are in $ \mathcal{W}' $ as well. Therefore $ (\mathcal{W}',X') $ is 
a wave. 
\end{sbiz}

\begin{lem}\label{egy út végigvisz}
Let $ (\mathcal{W},X_0) $ be a maximal wave  and assume that $ P\in \mathcal{W} $ and let $ 
\mathcal{W}_0=\mathcal{W}\setminus \{ P \} $. Then there is an 
$ s $-arborescence $ \mathcal{A} $   
  such that
\begin{enumerate}
\item $ A(P)\subseteq A(\mathcal{A}) $,
\item $ A(\mathcal{A})\cap A(\mathcal{W}_0)=\varnothing $,
\item $ A(\mathcal{A})\cup A(\mathcal{W}_0) $ is independent,
\item $ t\in V(\mathcal{A}) $,
\item there is a maximal wave with respect to $ \mathfrak{D}( A(\mathcal{A})) $ which is a complete extension 
of the $ \mathfrak{D}(A(\mathcal{A})) $-wave $ (\mathcal{W}_0, X_0) $.
\end{enumerate}   
\end{lem}

\begin{ssbiz}

\begin{prop}
The pair $ (\mathcal{W}_0,X_0)= (\mathcal{W}\setminus \{ P \}, X_0) $ is a maximal wave with respect to $ \mathfrak{D}(A(P)) $.
\end{prop}
\begin{sbiz}
It is clearly a wave thus we show just the maximality. Suppose that $ (\mathcal{Q},Y) $ is a proper extension of $ (\mathcal{W}\setminus 
\{ P \}, X_0) $ with respect to $ \mathfrak{D}(A(P)) $. Necessarily $ \mathsf{end}(P)\in Y $ otherwise it would be a wave with 
respect to $ \mathfrak{D} $ which properly extends $ (\mathcal{W},X_0) $. Let $ e $ be the last edge of $ P $. We know that $ 
A_{last}(\mathcal{Q}) $ spans $ 
\mathsf{in}_D(Y) $ in $ \mathcal{M}/e $. Since $ A(\mathcal{Q}) $ is $ [\mathcal{M}/\mathsf{span}_{\mathcal{M}}(A(P))] $-independent it 
follows that $ (\mathcal{Q}\cup 
\{ P \},Y) $ is a $ \mathfrak{D} $-wave. But then it properly extends $ (\mathcal{W},X_0) $ which is a contradiction.
\end{sbiz}\\

Fix a well-ordering of $ A $ with order type $ \left|A\right|\leq \omega $.  We 
 build the arborescence $ \mathcal{A} $ by recursion. Let $ \mathcal{A}_0:=P $.  Assume that $ \mathcal{A}_m, 
\mathcal{W}_m $ and $ X_m $ 
has already defined  for $ m\leq n $ in such a way that 

\begin{enumerate}
\item $ A(\mathcal{A}_m)\cap A(\mathcal{W}_m)=\varnothing $,
\item $ A(\mathcal{A}_m)\cup A(\mathcal{W}_m) $ is independent,
\item $ (\mathcal{W}_m,X_m) $ is a maximal wave with respect to $\mathfrak{D}_m:= \mathfrak{D}(A(\mathcal{A}_m)) $ and a 
complete extension of the $ \mathfrak{D}_m $-wave $ (\mathcal{W}_k,X_k) $  whenever $ k<m $,
\item for $ 0 \leq k<n $ we have $ \mathcal{A}_{k+1}=\mathcal{A}_k+e_k $ for some $e_k\in  \mathsf{out}_{D}(V(\mathcal{A}_k)) $.
\end{enumerate}

\noindent If $ t\in V(\mathcal{A}_n) $, then $ \mathcal{A}_n $ satisfies the requirements of Lemma \ref{egy út végigvisz} thus we are 
done. Hence we may 
assume that $ t\notin V(\mathcal{A}_n) $.

\begin{prop}
$ \mathsf{out}_{D-\mathsf{span}_{\mathcal{M}}(A(\mathcal{W}_n))}(V(\mathcal{A}_n)) \neq \varnothing $.
\end{prop}
\begin{sbiz}
We claim that the  $ \mathfrak{D}_n $-wave  $ (\mathcal{W}_n,X_n) $ is  not a  $ \mathfrak{D} $-wave. Indeed, suppose it is, 
then $ 
\mathsf{end}(P)\notin X_n $ (since $ 
  A_{last}(\mathcal{W}_n) $ does not span the last edge $ e $ of 
  $ P $)  and therefore $ X_n \subsetneq X_0 $ thus it extends $ (\mathcal{W},X_0) $ properly with respect to $ \mathfrak{D} $ 
  contradicting to the 
  maximality of $ (\mathcal{W},X_0) $.   Hence the $ s 
$-arborescence $ \mathcal{A}_n $ need to have 
an edge $ e\in \mathsf{in}_D(X) $. Let $ Q $ be a path that we obtain by applying Proposition 
\ref{t elér max wave} with $ (\mathcal{W}_n,X_n) $ and $ \mathsf{head}(e) $ in the system $ \mathfrak{D}_n $. Consider the last vertex 
$ v $ of $ Q $ which is in $ 
V(\mathcal{A}_n) $. Since $ v\neq t $  there is an outgoing edge $ f $ of $ v $ in $ Q $ and hence $ f\in 
\mathsf{out}_{D-\mathsf{span}_{\mathcal{M}}(A(\mathcal{W}_n))}(V(\mathcal{A}_n)) $. 
\end{sbiz}\\

  Pick the smallest element $ e_n $ of $ \mathsf{out}_{D-\mathsf{span}_{\mathcal{M}}(A(\mathcal{W}_n))}(V(\mathcal{A}_n))  $ and let $ 
  \mathcal{A}_{n+1}:=\mathcal{A}_n+e_n $.  
  Let $ (\mathcal{W}_{n+1},X_{n+1}) $ be a maximal 
  wave 
  with respect to $ \mathfrak{D}_{n+1}$ which extends $ (\mathcal{W}_n,X_n) $ (exists 
  by Corollary \ref{maximal wave} ). Lemma 
  \ref{egy él kiszed complete extension} ensures that it is a complete extension.
  
  Suppose, to the contrary, that the recursion does not stop after finitely many steps. Let  
  \[ \mathcal{A}_{\infty}:=\left( \bigcup_{n=0}^{\infty}V(\mathcal{A}_n), \bigcup_{n=0}^{\infty}A(\mathcal{A}_n)  \right). \] Note that $ 
  A(\mathcal{A}_\infty) $ is independent and $ \left\langle 
  (\mathcal{W}_n,X_n): n<\omega \right\rangle $ is an $ \leq 
  $-increasing sequence of $ \mathfrak{D}(A(\mathcal{A}_{\infty})) $-waves. Let $ (\mathcal{W}_\infty, 
  X_\infty) $ be a maximal $ \mathfrak{D}(A(\mathcal{A}_{\infty})) $-wave  which extends $ \sup_n (\mathcal{W}_n,X_n) $ 
  (see Lemma \ref{sup of waves}). 
  
  It may not be a wave with respect to $ \mathfrak{D} $.  Hence the $ s $-arborescence $ 
  \mathcal{A}_{\infty} $ contains an edge $ e\in 
  \mathsf{in}_D(X_{\infty}) $. Apply Proposition 
  \ref{t elér max wave} with $ (\mathcal{W}_\infty,X_\infty) $ and $ \mathsf{head}(e) $ in the system $ 
  \mathfrak{D}(A(\mathcal{A}_\infty)) 
  $.  Consider the last vertex $ v $ of the resulting $ Q $ which is in $ 
  V(\mathcal{A}_\infty) $. Since $ v\neq t $ by assumption there is an outgoing edge $ f $ of $ v $ in $ Q $. Then $ f\in 
  \mathsf{out}_{D-\mathsf{span}_{\mathcal{M}}(A(\mathcal{W}_\infty))}(V(\mathcal{A}_\infty)) $ which implies that for some $ n_0<\omega $ 
  we have $ f\in 
    \mathsf{out}_{D-\mathsf{span}_{\mathcal{M}}(A(\mathcal{W}_n))}(V(\mathcal{A}_n)) $ whenever $n>n_0 $. But then the infinitely many 
    pairwise distinct edges $ \{ 
    e_n: n_0<n<\omega \} $ are all smaller than $ f $ in our fixed well-ordering on $ A $ which contradicts to the fact that the type of  
    this well-ordering is at most $ \omega $.  
\end{ssbiz}\\

The Theorem follows easily from  Lemma \ref{egy út végigvisz}. Indeed, pick a maximal wave $ (\mathcal{W}_0,X_0) $ with respect to $ 
\mathfrak{D}_0:= \mathfrak{D} $ where $ \mathcal{W}_0=\{ P_n 
\}_{n<\omega} $. Apply Lemma \ref{egy út végigvisz} with $ P_0\in \mathcal{W}_0 
$. The resulting arborescence $ \mathcal{A}_0 $  
contain a unique $ s \rightarrow t $ path $ P_0^{*} $ which is necessarily a 
forward-continuation of $ P_0 $ (usage of a new edge from $ \mathsf{in}_D(X_0) $ would lead to dependence).  Then by Lemma \ref{egy út 
végigvisz} we have a 
maximal wave $ (\mathcal{W}_1,X_1) $ (where $ X_1 \subseteq X_0 $) with respect to $ \mathfrak{D}_1:=\mathfrak{D}_0(A(\mathcal{A}_0)) $ 
such that $ 
\mathcal{W}_1=\{ P_n^{1} \}_{1 \leq n<\omega} 
$ where $ P_n^{1} $ is a 
forward continuation of $ P_n $. Then we apply Lemma \ref{egy út végigvisz} with the $ \mathfrak{D}_1 $-wave $ (\mathcal{W}_1,X_1) $  and 
$ P_1^{1}\in \mathcal{W}_1 $  and continue the process recursively. 
By the construction  $ \bigcup_{n< m}A(P_n^{*}) $ is independent for each $ m<\omega $. Since 
 $ \mathcal{M} $ is finitary $ \bigcup_{n< \infty}A(P_n^{*}) $ is independent as well thus 
 $ \mathcal{P}:=\{ P_n^{*} \}_{n<\omega} $ is a desired paths-system that satisfies the complementarity conditions with $ X_0 $.
\end{proof}

\section{Open problems}

We suspect that one can omit the countability condition for $ D $ in Theorem \ref{főtétel  atroid mengerb}  by analysing the famous 
infinite Menger's theorem \cite{aharoni2009menger} of Aharoni and Berger. We also think that it is possible to put matroid constraints on 
the outgoing edges of each vertex as well but this generalization contains  the Matroid intersection conjecture for finitary matroids as 
a special case, which problem is hard enough itself. 
The finitarity of the matroids  are used several times in the proof; we do not know yet if one can omit this condition.

\end{document}